\def\ZZ{\mathbb Z}
\newcommand{\set}[1]{\left\lbrace #1\right\rbrace}
\newcommand{\remove}[1]{ }
\newcommand{\lnko}[1]{\langle #1\rangle}
\newtheorem{theorem}{Theorem}[section]
\newtheorem{proposition}[theorem]{Proposition}
\newtheorem{lemma}[theorem]{Lemma}
\newtheorem{corollary}[theorem]{Corollary}
\theoremstyle{remark}
\newtheorem*{remark}{Remark}
\newtheorem*{remarks}{Remarks}\remove{}
\newtheorem*{example}{Example}
\newtheorem*{examples}{Examples}
\numberwithin{equation}{section}
\begin{document}
\title{Positive orthogonal functions}
\author{Florian Delage} 
\address{Département de mathématique\\
         Université de Strasbourg\\
         7 rue Re\-né Descartes\\
         67084 Strasbourg Cedex, France}
\email{fdelage@math.unistra.fr}
\thanks{}
\subjclass[2010]{42A75}
\keywords{Almost periodic functions, orthogonality, oscilations, residue classes}
\date{Version of 2015-10-16}

\begin{abstract}
The existence or non-existence of positive orthogonal functions for subspaces of almost periodic functions has important applications in studying the oscillatory behavior of vibrations. 
Cazenave, Haraux and Komornik have obtained a number of theorems of this type. 
The purpose of this paper is to answer an open question formulated in the 1980's, and to completely clarify the situation for subspaces defined by three periods.
\end{abstract}

\maketitle

\section{Introduction}\label{s1}

Given a positive real number $T$, we denote by $X_T$ the vector space of locally square summable, $T$-periodic functions of zero mean value. 
We will often identify the functions in $X_T$ with their restrictions to the interval $(0,T)$.

By the elements of Fourier series we know that
\begin{itemize}
\item $X_T$ is spanned by the family of exponential functions $e^{2i\pi mt/T}$ where $m$ runs over the set of non-zero integers;
\item $X_T$ is the orthogonal complement in $L^2(0,T)$ of the characteristic function $h_1$  of $(0,T)$.
\end{itemize}
We observe in this case that $h_1$ is strictly positive on $(0,T)$.

Now let us consider finitely many positive real numbers $T_1,\ldots, T_n$ and the vector space  $X:=X_{T_1}+\cdots+X_{T_n}$. 
Generalizing some earlier theorems in \cite{HarKom1984} and \cite{HarKom1985}, the following results were obtained in \cite{Kom1987}:
\begin{itemize}
\item there exists a  number $T>0$ such that $X$ is dense in $L^2(0,T')$ for all $0<T'<T$, and $X$ is not dense for any $T'>T$;
\item $X$ has a finite codimension $d$ in $L^2(0,T)$, satisfying the inequalities $1\leq d \leq n$.
\end{itemize}
Moreover, $T$ and $d$ have been explicitly determined in function of the arithmetical properties of the periods $T_1,\ldots, T_n$.

In particular, it was shown that $d=1$ if and only if all the ratios $T_i/T_j$ are rational. 
In this case $X_T$ is the orthogonal complement in $L^2(0,T)$ of some function $h_n$  again.
Since $X$ is invariant for complex conjugation, $h_n$ may (and will) be assumed to be real-valued. 
Then we may ask whether this function may be chosen to be strictly positive on $(0,T)$ as in case $n=1$. 

Apart from its own interest, the existence or non-existence of positive orthogonal functions has important consequences concerning certain oscillation properties of vibrating membranes and plates; see, e.g., \cite{CazHar1984}, \cite{CazHar1985},  \cite{CazHar1987}, \cite{CazHar1988}, \cite{HarKom1985}, \cite{HarKom1984},  \cite{Kom1989a}, \cite{Kom1990}, \cite{Kom1989c},  \cite{HarKom1991}, \cite{Kom1993} and their references.

It follows from the results of \cite{Kom1987} that $h_2$ always may be chosen to be strictly positive, while for $n=4$ and $(T_1,T_2,T_3,T_4)=(105,70,42,30)$ the function $h_4$ has both strictly positive and strictly negative values. 

It remained an open question whether there exist such counterexamples for $n=3$. 
Answering this question we prove that $h_3$ can never have both strictly positive and strictly negative values. 

Moreover, we completely clarify the sign of $h_3$, by characterizing its strict positivity in functions of the arithmetic properties of the periods $T_1,T_2,T_3$.

Since the ratios $T_i/T_j$ are rational, we may assume by scaling that the periods are positive integers. 
We will use the notations $\lnko{a,b}$ and $\lnko{a,b,c}$ for the greatest common divisor of the integers $a,b$ and $a,b,c$, respectively. 

In this case the results of \cite{Kom1987} show that 
\begin{equation*}
T=T_1+T_2+T_3-\lnko{T_1,T_2}-\lnko{T_2,T_3}-\lnko{T_3,T_1}+\lnko{T_1,T_2,T_3},
\end{equation*}
and that $h_3$ is constant on each interval
\begin{equation*}
((j-1)\lnko{T_1,T_2,T_3}, j\lnko{T_1,T_2,T_3}),\quad j=1,\ldots, T/\lnko{T_1,T_2,T_3}.
\end{equation*} 
It follows from the minimality of $T$ that this constant is nonzero on the first and the last of these intervals.

Denoting the sequence of these constants by $c_0,\ldots, c_{T/\lnko{T_1,T_2,T_3}}$, we will often write 
\begin{equation*}
h_n\sim c_0,\ldots, c_{T/\lnko{T_1,T_2,T_3}}
\end{equation*}
for brevity.
Then $c_0\ne 0$ and $c_{T/\lnko{T_1,T_2,T_3}}\ne 0$.

\begin{theorem}\label{t11}
Let $T_1$, $T_2$, $T_3$ be three positive integers, and choose $h_3$ to be positive in a right neighborhood of $0$.

\begin{enumerate}[\upshape (i)]
\item $h_3$ is nonnegative in $(0,T)$.
\item $h_3$ vanishes on at least one of the above intervals if and only if $1< \lnko{T_i,T_j}< \min(T_i,T_j)$ whenever $i\ne j$.
\end{enumerate}
\end{theorem}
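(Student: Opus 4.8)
The plan is to convert the analytic definition of $h_3$ into a purely algebraic statement about one generating polynomial, find a closed product formula for it, and read both assertions off the shape of that product. By scaling I may assume $\lnko{T_1,T_2,T_3}=1$, so $h_3$ is constant on the unit intervals $(j-1,j)$; writing $c_0,\dots,c_{T-1}$ for these values I encode them in $C(x)=\sum_j c_j x^j$, of degree at most $T-1$. Orthogonality of $h_3$ to each $X_{T_i}$ means exactly that the periodization $\sum_k h_3(\cdot+kT_i)$ is constant, i.e. that the residue-class sums $\sum_{j\equiv r\ (T_i)}c_j$ are independent of $r$; evaluating at roots of unity, this is equivalent to $C$ vanishing at every nontrivial $T_i$-th root of unity for $i=1,2,3$.

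Writing $[n]_x:=1+x+\cdots+x^{n-1}=(x^n-1)/(x-1)$, I would then verify that
\[
C(x)=\frac{[T_1]_x\,[T_2]_x\,[T_3]_x}{[\lnko{T_1,T_2}]_x\,[\lnko{T_2,T_3}]_x\,[\lnko{T_3,T_1}]_x}
\]
is a genuine polynomial meeting all these vanishing conditions. At a primitive $m$-th root of unity with $m>1$, the numerator vanishes to order $a:=\#\{i:m\mid T_i\}$ and the denominator to order $\#\{\text{pairs}:m\mid\lnko{T_i,T_j}\}=\binom a2$; since $\lnko{T_1,T_2,T_3}=1$ no $m>1$ divides all three, so $a\le 2$ for every relevant $m$, and the net order $a-\binom a2$ is $\ge 0$ always and $\ge 1$ precisely at the required roots. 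As $\deg C=T-1$ and the solution space is one-dimensional, this $C$ is, up to a positive scalar, the coefficient vector of $h_3$; note $C(0)>0$, matching the normalization.

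For (i) I would use that $\lnko{\lnko{T_i,T_j},\lnko{T_i,T_k}}\mid\lnko{T_1,T_2,T_3}=1$, so the three pairwise gcds are pairwise coprime and each divides a distinct period. Matching $\lnko{T_1,T_2}\mid T_1$, $\lnko{T_2,T_3}\mid T_2$, $\lnko{T_3,T_1}\mid T_3$ factors
\[
C(x)=[T_1/\lnko{T_1,T_2}]_{x^{\lnko{T_1,T_2}}}\cdot[T_2/\lnko{T_2,T_3}]_{x^{\lnko{T_2,T_3}}}\cdot[T_3/\lnko{T_3,T_1}]_{x^{\lnko{T_3,T_1}}},
\]
a product of three polynomials with $0/1$ coefficients; hence all $c_j\ge 0$, which is exactly $h_3\ge 0$.

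For (ii) I would first observe that the upper bound is automatic: if $\lnko{T_i,T_j}=\min(T_i,T_j)$, say $T_i\mid T_j$, then $\lnko{T_i,T_j}$ and $\lnko{T_i,T_k}$ both divide $T_i$ and are coprime, forcing $\lnko{T_i,T_k}=1$; thus, under $\lnko{T_1,T_2,T_3}=1$, the condition is equivalent to ``all three pairwise gcds exceed $1$''. If they all do, then by the factorization above every nonzero element of $\supp C$ has the form $\lnko{T_1,T_2}i+\lnko{T_2,T_3}j+\lnko{T_3,T_1}k$ with all differences $\ge 2$, so $c_1=0$ and $h_3$ vanishes on an interval. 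Conversely, if some pair is coprime, say $\lnko{T_3,T_1}=1$, I would regroup $C=\bigl([T_1]_x[T_2]_x/[\lnko{T_1,T_2}]_x\bigr)\cdot[T_3/\lnko{T_2,T_3}]_{x^{\lnko{T_2,T_3}}}=:P\cdot E$, where the $n=2$ factor $P=[\lnko{T_1,T_2}]_x\,[T_1/\lnko{T_1,T_2}]_y\,[T_2/\lnko{T_1,T_2}]_y$ (with $y=x^{\lnko{T_1,T_2}}$) has strictly positive coefficients on the full block $\{0,\dots,M\}$, $M=T_1+T_2-\lnko{T_1,T_2}-1$. Since the coefficient of $x^e$ in $C$ sums $P$ over the multiples of $\lnko{T_2,T_3}$ lying in $[e-M,e]\cap[0,T_3-\lnko{T_2,T_3}]$, and since $M+1=T_1+T_2-\lnko{T_1,T_2}\ge T_2\ge\lnko{T_2,T_3}$, every window of $M+1$ consecutive integers contains such a multiple; a boundary-by-boundary check (using $s=0$ near the bottom, $s=T_3-\lnko{T_2,T_3}$ near the top, and an intermediate multiple in between) shows every coefficient on $\{0,\dots,\deg C\}$ is positive, so $h_3>0$.

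The main obstacle I expect is this final no-gap argument: the roots-of-unity reformulation and the product formula are essentially formal, and the sign in (i) then follows at once, whereas proving strict positivity in the coprime case requires the quantitative estimate $\lnko{T_2,T_3}\le M+1$ together with careful control at both ends of the range, supported by the $n=2$ strict positivity recorded above.
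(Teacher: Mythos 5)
Your proposal is correct, and while part (i) lands on exactly the paper's identity, part (ii) takes a genuinely different route. For (i), your polynomial $C(x)$ is precisely the paper's $P_3$: the paper writes $P_3=\frac{(1-x^{T_3})(1-x^{T_2})(1-x^{T_1})}{(1-x^{\lnko{T_1,T_3}})(1-x^{\lnko{T_2,T_3}})(1-x^{\lnko{T_1,T_2}})}$ and uses the same pairing of each gcd with a period it divides to get three factors with $0/1$ coefficients; the only difference is in the derivation, where the paper builds $h_3=\sum a_jh_2(\cdot-j)$ through Proposition \ref{p31} and multiplies $Q\cdot P_2$, whereas you verify the closed form directly by counting vanishing orders at primitive $m$-th roots of unity ($a-\binom a2\ge 0$, and $\ge 1$ exactly where needed since $\lnko{T_1,T_2,T_3}=1$ forces $a\le 2$) and then invoke uniqueness up to a constant --- the same fact from the literature the paper itself relies on, and your degree count $\deg C=T-1$ matches. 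For (ii) the paper goes through the fine structure of the coefficient sequence $(a_j)$: all of Section \ref{s5} (Lemma \ref{l51} and Theorem \ref{t12}, giving $a_0=1$, $a_1=-1$ when $q,r\ge 2$, and $(1,0^{r-1})^\infty$ when $q=1$) plus Lemma \ref{l61} on the staircase profile of $h_2$, to conclude $h_3=0$ on $[1,2)$ in the vanishing case and $h_3>0$ in the coprime case. You bypass Theorem \ref{t12} entirely and read everything off the product formula: when all three pairwise gcds are $\ge 2$, the exponent $1$ is not of the form $\lnko{T_1,T_2}i+\lnko{T_2,T_3}j+\lnko{T_3,T_1}k$, so $c_1=0$ immediately (the same interval $(1,2)$ the paper exhibits); and in the complementary case your window argument on the convolution $C=P\cdot E$, using $\lnko{T_2,T_3}\le T_2\le T_1+T_2-\lnko{T_1,T_2}$ and the $n=2$ strict positivity, gives $c_j>0$ for all $0\le j\le T-1$, which I checked goes through including the degenerate ends. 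Your reduction of the case $\lnko{T_i,T_j}=\min(T_i,T_j)$ to the coprime case, via pairwise coprimality of the three gcds under the normalization $\lnko{T_1,T_2,T_3}=1$, is a nice unification the paper handles instead as a separate trivial case $h_3=h_2$ (and note that, like the paper's own three-case proof, your reading of the arithmetic condition tacitly uses this normalization). What each approach buys: yours is shorter and self-contained for Theorem \ref{t11}, avoiding the delicate exponential-sum computation of Section \ref{s5}; the paper's route is longer but yields the explicit coefficients of Theorem \ref{t12} and the symmetry statement of Corollary \ref{c52}, which your argument does not recover.
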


\begin{examples}\label{e12}
~
\begin{itemize}
\item For $(T_1,T_2,T_3)=(26, 24, 9)$ we have $\lnko{26,9}=1$ and
\begin{center}
$h_3\sim $ 1, 2, 3, 5, 7, 9, 12, 15, 18, 21, 24, 27, 29, 31, 33, 34, 35, 36, 36, 36, 36, 36, 36, 36, 36, 36, 
 35, 34, 33, 3, 29, 27, 24, 21, 18, 15, 12, 9, 7, 5, 3, 2, 1,
\end{center}
so that $h_3>0$.
\item For $(T_1,T_2,T_3)=(4, 8, 13)$ we have $\lnko{4,8}=\min(4,8)$ and
\begin{center}
$h_3\sim $ 1, 1, 1, 1, 1, 1, 1, 1, 2, 2, 2, 2, 2, 2, 2, 2, 3, 3, 3, 3, 3, 3, 3, 3, 4, 4, 4, 4, 4, 4, 4, 4, 4, 4, 4, 4, 4, 4, 4, 4, 3, 3, 3, 3, 3, 3, 3, 3, 2, 2, 2, 2, 2, 2, 2, 2, 1, 1, 1, 1, 1, 1, 1, 1,
\end{center}
so that $h_3>0$ again.
\item On the other hand, in case $(T_1,T_2,T_3)=(35, 21, 15)$ we have $\lnko{35,21}=7$, $\lnko{35,15}=5$ and $\lnko{21,15}=3$.
Hence $1< \lnko{T_i,T_j}< \min(T_i,T_j)$ whenever $i\ne j$, and
\begin{center}
$h_3\sim $ 1, 0, 0, 1, 0, 1, 1, 1, 1, 1, 2, 1, 2, 2, 2, 2, 2, 3, 2, 3, 3, 2, 3, 3, 3, 3, 3, 3, 3, 3, 3, 3, 3, 3, 3, 2, 3, 3, 2, 3, 2, 2, 2, 2, 2, 1, 2, 1, 1, 1, 1, 1, 0, 1, 0, 0, 1.
\end{center}
\end{itemize}
\end{examples}

The proof of Theorem \ref{t11} will be based on the following explicit representation of the function $h_3$: 

\begin{theorem}\label{t12}
There exist three positive integer $p,q,r$ and a sequence $(a_j)_{j=0}^{pqr-q-r+1}$ of real numbers such that 
\begin{equation*}
h_3(t)=\sum_{j=0}^{pqr-q-r+1}a_jh_2(t-j\lnko{T_1,T_2,T_3}).
\end{equation*}
Furthermore,
\begin{enumerate}[\upshape (i)]
\item if $q=1$, then $(a_j)_{j=0}^{pqr-q-r+1}=(a_j)_{j=0}^{pr-r}$ is the beginning of the sequence $(1,0^{r-1})^{\infty}$;
\item if $r=1$, then $(a_j)_{j=0}^{pqr-q-r+1}=(a_j)_{j=0}^{pq-q}$ is the beginning of the sequence $(1,0^{q-1})^{\infty}$;
\item if $q\ge 2$ and $r\ge 2$, then the sequence $(a_j)_{j=0}^{pqr-q-r+1}$ is $qr$-periodical and it is the beginning of the sequence 
\begin{equation*}
\sum_{l=0}^{\infty}0^{lq}(1,-1,0^{r-2})^{\infty}=\sum_{l=0}^{\infty}0^{lr}(1,-1,0^{q-2})^{\infty}.
\end{equation*}
\end{enumerate}
\end{theorem}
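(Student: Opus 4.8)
The plan is to pass to the Fourier side. Extend every $T_i$-periodic function and $h_3$ by zero outside the relevant interval and set $\widehat{f}(\xi)=\int_{\RR}f(t)e^{-2i\pi\xi t}\,dt$. Since $X_{T_i}$ is spanned by the exponentials $e^{2i\pi mt/T_i}$ with $m\in\ZZ^{\ast}:=\ZZ\setminus\{0\}$, orthogonality of a function supported in $[0,T]$ to $X:=X_{T_1}+X_{T_2}+X_{T_3}$ is equivalent to the vanishing of its transform on $\frac1{T_1}\ZZ^{\ast}\cup\frac1{T_2}\ZZ^{\ast}\cup\frac1{T_3}\ZZ^{\ast}$. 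As $X$ has codimension $1$, the space of such functions is one–dimensional and spanned by $h_3$; hence it suffices to produce one nonzero function of the announced shape, supported exactly in $[0,T]$, that is orthogonal to $X$.

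Take for $h_2$ the orthogonal function of the pair $(T_1,T_2)$: it is supported in $[0,T_1+T_2-\lnko{T_1,T_2}]$ and $\widehat{h_2}$ already vanishes on $\frac1{T_1}\ZZ^{\ast}\cup\frac1{T_2}\ZZ^{\ast}$. For $g:=\lnko{T_1,T_2,T_3}$ and a finite sequence $(a_j)$, the candidate $h:=\sum_j a_jh_2(\cdot-jg)$ satisfies
\begin{equation*}
\widehat{h}(\xi)=\widehat{h_2}(\xi)\,P\!\left(e^{-2i\pi g\xi}\right),\qquad P(z):=\sum_j a_jz^j,
\end{equation*}
so the constraints coming from $T_1,T_2$ hold automatically and only the vanishing on $\frac1{T_3}\ZZ^{\ast}$ must be arranged through $P$. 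I then fix the arithmetic data by $q:=\lnko{T_1,T_3}/g$, $r:=\lnko{T_2,T_3}/g$ and $p:=T_3/(qrg)$; one verifies $p,q,r$ are positive integers, $\gcd(q,r)=1$ (because $\gcd(\lnko{T_1,T_3},\lnko{T_2,T_3})=g$) and $T_3=pqrg$. Thus at $\xi=m/T_3$ the quantity $e^{-2i\pi gm/T_3}=e^{-2i\pi m/(pqr)}$ sweeps the $pqr$-th roots of unity. A divisibility check gives $m/T_3\in\frac1{T_1}\ZZ$ iff $pr\mid m$, and $m/T_3\in\frac1{T_2}\ZZ$ iff $pq\mid m$, so $\widehat{h_2}(m/T_3)=0$ as soon as $pr\mid m$ or $pq\mid m$. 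I therefore choose $P$ to vanish at the remaining $pqr-(q+r-1)=pqr-q-r+1$ roots of unity — those $z$ with $z^q\ne1$ and $z^r\ne1$; this makes $\widehat{h}$ vanish on $\frac1{T_3}\ZZ^{\ast}$, and the support bound $\deg P\le pqr-q-r+1$ (forced by $\supp h\subseteq[0,T]$) makes $P$ unique up to a constant. Removing the $q$-th and $r$-th roots of unity (whose only common root is $z=1$) from the full set of $pqr$-th roots gives, by inclusion–exclusion,
\begin{equation*}
P(z)=c\,\frac{(z^{pqr}-1)(z-1)}{(z^q-1)(z^r-1)}=c\,(1-z^{pqr})\,\frac{1-z}{(1-z^q)(1-z^r)}.
\end{equation*}

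It remains to read the coefficients from the power series $\frac{1-z}{(1-z^q)(1-z^r)}=\sum_{l\ge0}z^{lq}\,\frac{1-z}{1-z^r}$, whose coefficient sequence is exactly $\sum_{l\ge0}0^{lq}(1,-1,0^{r-2})^{\infty}$. Since the displayed quotient is a genuine polynomial of degree $pqr-q-r+1<pqr$, the factor $1-z^{pqr}$ leaves all coefficients of index $\le pqr-q-r+1$ untouched, which is assertion (iii); the symmetry in $q\leftrightarrow r$ of the quotient yields the second displayed form. When $q=1$ or $r=1$ the series collapses to $\frac1{1-z^r}$ or $\frac1{1-z^q}$, giving (i) and (ii).

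The step I expect to be the main obstacle is proving that this coefficient sequence is genuinely $qr$-periodic, so that the finite sequence $(a_j)$ really is the initial segment of the stated periodic pattern. This is exactly where coprimality of $q$ and $r$ is essential: through a Popoviciu-type identity the representation count $N(n):=\#\{(a,b)\ge0:aq+br=n\}$ satisfies $N(n+qr)=N(n)+1$, and since the coefficients of $\frac{1-z}{(1-z^q)(1-z^r)}$ are the differences $N(n)-N(n-1)$, this forces $a_{n+qr}=a_n$. A secondary check is that $\supp h=[0,T]$ exactly, which follows from $a_0\ne0$, $a_{pqr-q-r+1}\ne0$, and the identity $(pqr-q-r+1)g+(T_1+T_2-\lnko{T_1,T_2})=T$; once this is in place, the one–dimensionality of the orthogonal complement of $X$ identifies $h$ with a nonzero multiple of $h_3$, completing the proof.
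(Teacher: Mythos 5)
Your proposal is correct, but it reaches part (iii) by a genuinely different route than the paper. The existence step is essentially the paper's Proposition \ref{p31}: both of you reduce orthogonality to the vanishing of a polynomial $P$ at the $pqr$-th roots of unity $z$ with $z^q\ne 1$ and $z^r\ne 1$, and both arrive at $P(z)=\frac{(1-z)(1-z^{pqr})}{(1-z^q)(1-z^r)}$ (the paper verifies that the denominator's zeros cancel; you count the prescribed roots by inclusion--exclusion and use the degree bound forced by $\supp h\subseteq[0,T]$ --- the same object either way, and your bookkeeping with $g=\lnko{T_1,T_2,T_3}$ left unnormalized is only cosmetically different from the paper's scaling to $\lnko{T_1,T_2,T_3}=1$). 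The divergence is in how the coefficient pattern is identified. The paper never expands this closed form: in Section \ref{s5} it takes the periodic pattern as an ansatz, proves the structural Lemma \ref{l51} (each $a_j$ with $0<j<qr$ receives at most one $+1$ and one $-1$; $a_j=0$ for $(q-1)(r-1)<j<qr$; $a_{(q-1)(r-1)}=1$), and then verifies the exponential-sum system \eqref{39} directly, splitting into the cases $A^{qr}\ne 1$ and $A^{qr}=1$ and carrying out a delicate computation with the thresholds $m_\ell$ that ends in showing $E=0$. You instead read the coefficients off the formal power series $\frac{1-z}{(1-z^q)(1-z^r)}=\frac{1}{1-z^q}\cdot\frac{1-z}{1-z^r}$, observe that multiplying by $1-z^{pqr}$ cannot disturb indices $\le pqr-q-r+1<pqr$, and obtain the $qr$-periodicity from the representation-count identity $N(n+qr)=N(n)+1$ for coprime $q,r$, since $a_n=N(n)-N(n-1)$. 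Your route is shorter and makes the entire trigonometric computation of Section \ref{s5} unnecessary, with the coprimality of $q$ and $r$ isolated in one clean arithmetic fact; the paper's route, in exchange, produces the finer combinatorial information of Lemma \ref{l51} that is reused for the symmetry Corollary \ref{c52} and for Theorem \ref{t11}(ii) --- though the facts needed there ($a_0=1$, $a_1=-1$, $a_{(q-1)(r-1)}=1$) also drop out of your expansion. Two small elisions, both easily repaired: you assert $a_{pqr-q-r+1}\ne 0$ without argument (it equals $1$, being the leading coefficient of the quotient, or via periodicity $a_{pqr-q-r+1}=a_{(q-1)(r-1)}$ together with $N((q-1)(r-1))-N((q-1)(r-1)-1)=1$), and for the identification $h=ch_3$ you only need $\supp h\subseteq[0,T]$ together with $h\ne 0$ (clear from $h=h_2>0$ on $(0,g)$), so the exactness of the support is indeed a side remark rather than a load-bearing step.
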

In the above theorem we use the following notations:
\begin{align*}
&(1,0^{r-1})=1,\underbrace{0,\ldots \ldots,0}_{r-1};\\
&(1,-1,0^{3})^{\infty}=1,-1,0,0,0,1,-1,0,0,0,1,-1,0,0,0,\ldots .
\end{align*}

The plan of this paper is the following. 
In Sections \ref{s2}, \ref{s3} we recall the construction of $h_n$ for $n=2, 3$, and we present a new, simpler construction of $h_3$.
Using this construction Theorems \ref{t11} and \ref{t12} are proved in Sections \ref{s4}--\ref{s6}. 
In the final Section \ref{s7} of the paper we present some partial results and conjectures concerning the case $n>3$ and the existence of a smallest number $T'$  such that $L^2(0,T')$ contains a strictly positive function orthogonal to $X$.


\section{The construction of $h_2$}\label{s2}

Fix two positive integers $T_1$ and $T_2$.
Then the set $X:=\overline{X_{T_1}+X_{T_2}}$ is spanned by the functions $e^{2i\pi mt/T_1T_2}$ where $m$ runs over the non-zero multiples of $T_1$ and $T_2$.
 
We recall from \cite{Kom1987} that
\begin{equation*}
T_{12}:= T=T_1+T_2-\lnko{T_1,T_2}.
\end{equation*} 

\begin{proposition}\label{p21}
Apart from a multiplicative constant the function $h_2$ is given by the formula
\begin{equation}\label{21}
h_2(t):=\sum_{j=0}^{\frac{T_{12}-T_1}{\lnko{T_1,T_2}}}h_1(t-j\lnko{T_1,T_2})=\sum_{j=0}^{\frac{T_2}{\lnko{T_1,T_2}}-1}h_1(t-j\lnko{T_1,T_2}),
\end{equation}
where $h_1$ denotes the characteristic function of the interval $[0,T_1)$.

In particular, $h_2>0$ in $(0,T)$.
\end{proposition}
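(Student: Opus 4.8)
The plan is to verify directly that the right-hand side of \eqref{21} is a nonzero function of $L^2(0,T)$ orthogonal to $X$, and then to invoke the fact recalled from \cite{Kom1987} that $X$ has codimension $d=1$ in $L^2(0,T)$ (here all ratios are rational, so $d=1$): the orthogonal complement being one-dimensional, any nonzero function orthogonal to $X$ must coincide with $h_2$ up to a multiplicative constant. Throughout I write $g=\lnko{T_1,T_2}$ and $T_1=ga$, $T_2=gb$ with $\lnko{a,b}=1$, so that $T_{12}=T_1+T_2-g=g(a+b-1)$ and the two upper limits in \eqref{21} indeed agree, both equalling $b-1=\frac{T_2}{g}-1$.

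First I would dispose of the positivity claim, which is elementary. The summand $h_1(\cdot-jg)$ is the characteristic function of $[jg,(j+a)g)$, so $h_2$ is a sum of $b$ translated copies of an interval of length $ag=T_1$, with left endpoints $0,g,\ldots,(b-1)g$. Since $a\ge 1$, consecutive intervals $[jg,(j+a)g)$ and $[(j+1)g,(j+1+a)g)$ overlap (or abut), so their union is the single interval $[0,(a+b-1)g)=[0,T)$. Hence $h_2\ge 1$ on $[0,T)$, and in particular $h_2>0$ on $(0,T)$; this also shows $\supp h_2\subset[0,T)$, so the integrals below may be taken over all of $\RR$.

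The heart of the matter is orthogonality. Because $X=\overline{X_{T_1}+X_{T_2}}$ is spanned by the exponentials $e^{2i\pi mt/T_1T_2}$ with $m$ a nonzero multiple of $T_1$ or of $T_2$, it suffices, by continuity of the inner product, to check that $\widehat{h_2}(\omega):=\int_0^T h_2(t)e^{-i\omega t}\,dt$ vanishes at $\omega=2\pi m/T_1T_2$ for every such $m$. Translating the characteristic function and summing a geometric series gives
\begin{equation*}
\widehat{h_2}(\omega)=\frac{1-e^{-i\omega T_1}}{i\omega}\sum_{j=0}^{b-1}e^{-i\omega jg}=\frac{1-e^{-i\omega T_1}}{i\omega}\cdot\frac{1-e^{-i\omega T_2}}{1-e^{-i\omega g}},
\end{equation*}
the last equality being valid whenever $e^{-i\omega g}\ne 1$. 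If $m$ is a nonzero multiple of $T_1$, then $\omega T_2=2\pi m/T_1\in 2\pi\ZZ$, so the factor $1-e^{-i\omega T_2}$ vanishes; symmetrically, if $m$ is a multiple of $T_2$, then $1-e^{-i\omega T_1}$ vanishes. In either case $\widehat{h_2}(\omega)=0$.

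The one point requiring care — and the main obstacle — is the degenerate case in which the denominator $1-e^{-i\omega g}$ vanishes as well, i.e.\ $\omega g\in 2\pi\ZZ$; a short computation ($\omega g=2\pi m/\operatorname{lcm}(T_1,T_2)$) shows that this happens exactly when $m$ is a multiple of $\operatorname{lcm}(T_1,T_2)=T_1T_2/g$, hence a common multiple of $T_1$ and $T_2$. There the closed form is a $0/0$ indeterminacy and must be avoided: one uses instead the raw sum $\sum_{j=0}^{b-1}e^{-i\omega jg}=b$ and observes that then $\omega T_1\in 2\pi\ZZ$ as well, so the prefactor $(1-e^{-i\omega T_1})/i\omega$ already equals $0$. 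Thus $\widehat{h_2}(\omega)=0$ in this case too, and orthogonality holds without exception. Since $h_2$ is nonzero, the codimension-one statement from \cite{Kom1987} identifies it with the orthogonal function up to a constant, completing the proof.
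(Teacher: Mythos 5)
Your proof is correct and follows essentially the same route as the paper: factor the inner product with each spanning exponential $e^{2i\pi mt/T_1T_2}$ into the Fourier integral of $h_1$ times a geometric sum over the translates, show that one factor vanishes in each case, and invoke the known one-dimensionality of the orthogonal complement from \cite{Kom1987} to identify the function up to a multiplicative constant. The only differences are cosmetic: the paper normalizes $\lnko{T_1,T_2}=1$ by scaling and sidesteps your degenerate case $\omega g\in 2\pi\ZZ$ by ordering its cases so that common multiples of $T_1$ and $T_2$ fall under the case where the $h_1$-integral vanishes --- precisely the observation you make explicitly when the closed form of the geometric series becomes $0/0$.
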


\begin{remark}
If $T_2$ is a divisor of $T_1$, then we have simply $X=X_{T_1}$, $T_{12}=T_1$ and  $h_2=h_1$.
\end{remark}

\begin{proof}
We assume by scaling that $\lnko{T_1,T_2}=1$.
Since we already know that $h_2$ is unique apart from a multiplicative constant, it suffices to check that the function given by \eqref{21} satisfies the equality
\begin{equation*}
\int_0^Th_2(t)e^{2i\pi mt/T_1T_2}dt=0
\end{equation*}
for all non-zero integers $m$ such that $m/T_1$ or $m/T_2$ is integer.

Using \eqref{21} we obtain
\begin{align*}
\int_0^Th_2(t)e^{2i\pi mt/T_1T_2}dt 
&= \int_0^T\sum_{j=0}^{T_2-1}h_1(t-j)e^{2i\pi mt/T_1T_2}dt\\
&= \sum_{j=0}^{T_2-1} e^{2i\pi mj/T_1T_2}\int_0^Th_1(t)e^{2i\pi mt/T_1T_2}dt.
\end{align*}

If  $k:=m/T_2$ is a non-zero integer and, then, since $h_1=0$ in $(T_1,T)$,
\begin{equation*}
\int_0^Th_1(t)e^{2i\pi mt/T_1T_2}dt=\int_0^{T_1}h_1(t)e^{2i\pi k t/T_1}dt=0
\end{equation*}
by the definition of $h_1$.

If $\ell:=m/T_1$ is  integer and  $m/T_2$ is not integer, then $\ell /T_2=m/T_1T_2$ is not  integer either, so that
\begin{equation*}
\sum_{j=0}^{T_2-1} e^{2i\pi mj/T_1T_2}=\sum_{j=0}^{T_2-1} e^{2i\pi \ell j/T_2}=\frac{e^{2i\pi \ell}-1}{e^{2i\pi\ell /T_2 }-1}=0.\qedhere
\end{equation*}
\end{proof}
 
\section{The construction of $h_3$}\label{s3}

Fix three positive integers $T_1$, $T_2$ and $T_3$.
Then  the set
\begin{equation*}
X:=\overline{X_{T_1}+X_{T_2}+X_{T_3}}
\end{equation*} 
is spanned by the functions $e^{2i\pi mt/T_1T_2T_3}$ where $m$ runs over the non-zero multiples of $T_1T_2$, $T_2T_3$ and $T_3T_1$.
 
We recall from \cite{Kom1987} that
\begin{equation*}
T_{123}:= T=T_1+T_2+T_3-\lnko{T_1,T_2}-\lnko{T_2,T_3}-\lnko{T_3,T_1}+\lnko{T_1,T_2,T_3}.
\end{equation*} 

In order to simplify the formulas we assume henceforth by a scaling argument that 
\begin{equation}\label{31} 
\lnko{T_1,T_2,T_3}=1.
\end{equation}

We will prove that $h_3$, which is determined apart from a multiplicative constant, has the form 
\begin{equation*}   
h_3(t)=\sum_{j=0}^{T_{123}-T_{12}}a_jh_2(t-j)
\end{equation*}
with $h_2$ and $T_{12}$ as introduced in the preceding section, and  a suitable sequence $(a_j)$ of real numbers. 
The sign of $h_3$ will depend on the coefficients $a_j$.

The coefficients $a_j$ have to be chosen so that
\begin{equation}\label{32}
\int h_3(t)e^{2i\pi mt/(T_1T_2T_3)} dt=0
\end{equation}
for all non-zero integers $m$ such that at least one of the fractions $m/T_1T_2$, $m/T_1T_3$ or $m/T_2T_3$  is integer. 

Let us rewrite the integral in \eqref{32} as follows:
\begin{align*}
\int h_3(t)&e^{2i\pi mt/(T_1T_2T_3)} dt\\
&=\sum_{j=0}^{T_3-\lnko{T_1,T_3}-\lnko{T_2,T_3}+1}a_j\int h_2(t-j)e^{2i\pi mt/(T_1T_2T_3)} dt\\ 
&=\sum_{j=0}^{T_3-\lnko{T_1,T_3}-\lnko{T_2,T_3}+1}a_je^{2i\pi mj/(T_1T_2T_3)}\int h_2(t)e^{2i\pi mt/(T_1T_2T_3)} dt.
\end{align*}
Using the properties of $h_2$ the last integral  vanishes for all non-zero multiples of  $T_1T_3$ and $T_2T_3$. 
Therefore it suffices to choose the coefficients $a_j$ so that
\begin{equation}\label{33}
\sum_{j=0}^{T_3-\lnko{T_1,T_3}-\lnko{T_2,T_3}+1}a_je^{2i\pi mj/(T_1T_2T_3)}=0
\end{equation}
for all multiples $m$ of $T_1T_2$ such that none of $m/T_1T_3$ and $m/T_2T_3$ is integer.

The following remarks allow us to further simplify the problem.

\begin{enumerate}[\upshape (i)]
\item Using the periodicity of the exponential functions the equations \eqref{33} do not change if we add to $m$ a multiple of $T_1T_2T_3$.
We may therefore assume that $0<m<T_1T_2T_3$.
\item If $T_3$ is a divisor of $T_1$ or $T_2$ then $X=X_{T_1}+X_{T_2}$, $T_{123}=T_{12}$ and $h_3=h_2$. In this case $h_3>0$ on $(0,T)$.
\item If $\lnko{T_1,T_3}=\lnko{T_2,T_3}=1$, then we may choose $a_j=1$ for all $j$. Hence $h_3>0$ on $(0,T)$ again.
\end{enumerate}

In order to investigate the general cases we transform the system \eqref{33} further as follows.

Setting $n:=m/T_1T_2$, the system  becomes
\begin{equation}\label{34}
\sum_{j=0}^{T_3-\lnko{T_1,T_3}-\lnko{T_2,T_3}+1}a_je^{2i\pi nj/T_3}=0, \quad n\in N,
\end{equation}
where $N$ is the set of integers $n$ satisfying
\begin{equation}\label{35}
0<n<T_3, \quad \frac{nT_2}{T_3}\notin \ZZ \quad \text{and} \quad \frac{nT_1}{T_3}\notin \ZZ.
\end{equation}

Let us introduce the  integers
\begin{equation}\label{36}
q:=\lnko{T_1,T_3}\quad\text{and}\quad r:=\lnko{T_2,T_3}.
\end{equation}
Then $\lnko{q,r}=1$ by our assumption $\lnko{T_1,T_2,T_3}=1$, and therefore 
\begin{equation}\label{37}
T_3=pqr
\end{equation}
for a suitable positive integer $p$.

Let us also introduce two positive integers $\alpha$ and $\beta$ such that 
\begin{equation*}
T_1=\alpha q \quad \text{and} \quad T_2=\beta r;
\end{equation*}
then we have 
\begin{align*}
q&=\lnko{T_1,T_3}=\lnko{\alpha q, pqr}=q\lnko{\alpha , pr}
\intertext{and} 
r&=\lnko{T_2,T_3}=\lnko{\beta r, pqr}=r\lnko{\beta,pq}.
\end{align*}
We infer from the above  that 
\begin{equation}\label{38}
\lnko{q,r}=\lnko{\alpha , pr}=\lnko{\beta,pq}=1.
\end{equation}

Using these relations the expressions \eqref{34}--\eqref{35} may be rewritten in the form
\begin{equation}\label{39}
\sum_{j=0}^{pqr-q-r+1}a_je^{2i\pi jn/pqr}=0, \quad n\in N,
\end{equation}
where $N$ is the set of integers satisfying the conditions 
\begin{equation*}
0<n<pqr, \quad \frac{n\beta}{pq}\notin \ZZ \quad \mbox{and} \quad \frac{n\alpha}{pr}\notin \ZZ.
\end{equation*}
Using \eqref{38} the definition of $N$ may be simplified to
\begin{equation}\label{310}
N:=\set{n\in\ZZ\ :\ 0<n<pqr, \quad \frac{n}{pq}\notin \mathbb{Z} \quad \mbox{and} \quad \frac{n}{pr}\notin \mathbb{Z}}.
\end{equation}

Now we are ready to prove the existence of $h_3$ in the required form, and even to construct the coefficients $a_j$ explicitly:

\begin{proposition}\label{p31}
There exists a sequence $(a_j)$ of real numbers satisfying \eqref{39}.

Moreover, the following identify holds: 
\begin{equation}\label{311}
\sum_{j=0}^{pqr-q-r+1}a_jx^j=\frac{(1-x)(1-x^{pqr})}{(1-x^q)(1-x^r)}.
\end{equation} 
\end{proposition}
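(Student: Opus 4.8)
The plan is to exhibit the sequence $(a_j)$ directly as the coefficients of the rational function on the right-hand side of \eqref{311}, and then to read off \eqref{39} from the location of its zeros. Writing $\zeta:=e^{2i\pi/pqr}$ for a primitive $pqr$-th root of unity, the conditions \eqref{39} say precisely that the polynomial $\sum_j a_jx^j$ should vanish at $x=\zeta^n$ for every $n\in N$. So I would set
\begin{equation*}
P(x):=\frac{(1-x)(1-x^{pqr})}{(1-x^q)(1-x^r)},
\end{equation*}
and aim to prove that $P$ is a polynomial of degree $pqr-q-r+1$ with real coefficients whose zeros are exactly the numbers $\zeta^n$, $n\in N$. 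Once this is done, defining $a_0,\ldots,a_{pqr-q-r+1}$ by $P(x)=\sum_{j}a_jx^j$ yields a real sequence satisfying \eqref{39}, and \eqref{311} holds by construction.

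First I would check that $P$ is genuinely a polynomial. Since $q\mid pqr$, the geometric-series identity gives $\frac{1-x^{pqr}}{1-x^q}=\sum_{k=0}^{pr-1}x^{qk}$, so it remains to show that $1-x^r$ divides $(1-x)\sum_{k=0}^{pr-1}x^{qk}$. For this I would verify that every $r$-th root of unity is a root of the latter: at $x=1$ the factor $(1-x)$ supplies the required simple zero (the sum being $pr\neq0$ there), while for a nontrivial $r$-th root of unity $\eta$ one has $\eta^q\neq1$ because $\lnko{q,r}=1$, whence $\sum_{k=0}^{pr-1}\eta^{qk}=\frac{\eta^{pqr}-1}{\eta^q-1}=0$. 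Thus $P$ is a polynomial; it has integer, hence real, coefficients, and a comparison of leading behaviour as $x\to\infty$ shows $\deg P=pqr+1-q-r$, matching the index range in \eqref{39}. (Equivalently, the cyclotomic factorization $x^m-1=\prod_{d\mid m}\Phi_d$ together with $\lnko{q,r}=1$ identifies $P=\prod_{d\mid pqr,\ d\nmid q,\ d\nmid r}\Phi_d$, which makes both the integrality and the degree transparent.)

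It then remains to verify \eqref{39}, i.e.\ that $P(\zeta^n)=0$ for each $n\in N$. Fixing such an $n$ and writing $\omega:=\zeta^n$, the numerator of $P$ vanishes at $\omega$ because $\omega^{pqr}=1$. The point is that the denominator does not: one has $\omega^q=e^{2i\pi n/pr}$ and $\omega^r=e^{2i\pi n/pq}$, and the defining conditions $\frac{n}{pr}\notin\ZZ$ and $\frac{n}{pq}\notin\ZZ$ in \eqref{310} give exactly $\omega^q\neq1$ and $\omega^r\neq1$. Hence the rational expression for $P$ evaluates to $0$ at $\omega$, and since $P$ is the polynomial agreeing with that expression wherever the denominator is nonzero, $P(\omega)=0$, as required.

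The main obstacle is the polynomiality step together with the matching of zeros: both rest squarely on $\lnko{q,r}=1$ (coming from \eqref{38}), which is what forces the $q$-th and $r$-th roots of unity occurring in the numerator and denominator to cancel cleanly and leaves precisely the roots indexed by $N$. The degree bookkeeping is the only other thing to watch, since the number of coefficients must agree with the upper summation limit $pqr-q-r+1$; everything else reduces to a routine evaluation.
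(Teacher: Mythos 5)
Your proposal is correct and follows essentially the same route as the paper: you show that the rational function $\frac{(1-x)(1-x^{pqr})}{(1-x^q)(1-x^r)}$ is a polynomial (using $\lnko{q,r}=1$ to cancel the denominator's roots of unity, with the factor $1-x$ handling the double zero at $x=1$) and then observe that it vanishes at $e^{2i\pi n/pqr}$ for every $n\in N$, since the numerator vanishes there while the conditions $\frac{n}{pq},\frac{n}{pr}\notin\ZZ$ keep the denominator nonzero. Your geometric-series/divisibility verification of polynomiality is a slightly more computational variant of the paper's multiplicity-comparison argument, and your extra remarks (cyclotomic factorization, degree count showing the zeros are \emph{exactly} $\set{e^{2i\pi n/pqr}:n\in N}$) are sound but not needed.
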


\begin{proof}
First we show that the right side of \eqref{311} defines a polynomial $Q(x)$. 
For this we prove that each zero of the denominator is also a zero of the numerator, with at least the same multiplicity.

It is clear that $x=1$ is a double zero of both the denominator and the numerator. 

All other zeros of the denominator are simple because of the condition $\lnko{q,r}=1$ (see \eqref{38}), and all of them are also zeros of the factor $1-x^{pqr}$ in the numerator. 

Next we observe that if $n$ satisfies the conditions \eqref{310}, then $x:=e^{2i\pi n/pqr}$ is a zero of the factor $1-x^{pqr}$ in the numerator, but it is not a zero of the denominator. 
Hence $Q(x)=0$.
We conclude by writing
\begin{equation*}
Q(x)=\sum_{j=0}^{pqr-q-r+1}a_jx^j.\qedhere
\end{equation*}
\end{proof}

\section{Proof of the first part of Theorem \ref{t11}}\label{s4}

Given a step function $h$ of the form 
\begin{equation*} 
h(x):=c_k \quad \mbox{for}\quad x\in [k,k+1), \quad k\in\ZZ
\end{equation*}
with $c_k=0$ for all but finitely many nonnegative integers $k$, we introduce an associated polynomial by setting 
\begin{equation*} 
P(x):=\sum_kc_kx^k. 
\end{equation*}
We observe that if the coefficients of P are nonnegative, then $h$ is a nonnegative function. 

We know that $h_1, h_2, h_3$ are step functions. 
Let us determine the polynomials $P_1, P_2, P_3$ associated with $h_1, h_2, h_3$.
\begin{enumerate}[\upshape (i)]
\item We have $h_1=\chi_{[0,T_1)}$ and hence
\begin{equation*}
P_1(x)=\sum_{k=0}^{T_1-1}x^k=\frac{1-x^{T_1}}{1-x}.
\end{equation*}

\item It follows from the formula 
\begin{equation*}
h_2(t)=\sum_{j=0}^{\frac{T_2}{\lnko{T_1,T_2}}-1}h_1(t-j\lnko{T_1,T_2})
\end{equation*}
that 
\begin{equation*}
P_2(x)=\sum_{j=0}^{\frac{T_2}{\lnko{T_1,T_2}}-1}x^{j\lnko{T_1,T_2}}\cdot P_1(x)=\frac{1-x^{T_2}}{1-x^{\lnko{T_1,T_2}}}\cdot \frac{1-x^{T_1}}{1-x}.
\end{equation*}

\item Applying Proposition \ref{p31} it follows from the formula 
\begin{equation*}
h_3(t)=\sum_{j=0}^{T_{123}-T_{12}}a_jh_2(t-j)
\end{equation*}
that 
\begin{align*}
P_3(x)
&=\sum_{j=0}^{pqr-q-r+1}a_jx^j P_2(x)=Q(x)P_2(x)\\
&=\frac{1-X^{pqr}}{1-X^q}\cdot \frac{1-X^{T_2}}{1-X^r} \cdot \frac{1-X^{T_1}}{1-X^{\lnko{T_1,T_2}}}.
\end{align*}
\end{enumerate}
We complete the proof by observing that each fraction of the last product is a polynomial with nonnegative coefficients. 
Indeed, since $pqr, T_2, T_1$ are multiples of $q, r, \lnko{T_1,T_2}$, respectively, this follows from the identity 
\begin{equation*}
\frac{1-X^{ab}}{1-X^b}=\sum_{k=0}^ax^{kb},
\end{equation*}
valid for all positive integers $a$ and $b$.

We have established the nonnegativity of $h_3$.
For the proof of Theorem \ref{t11} (ii) we need the more explicit formula of Theorem \ref{t12} of the coefficients $a_j$. 
This will be derived in the next section by a different method.

\section{Proof of Theorem \ref{t12}}\label{s5}

We continue to use the notations and assumptions of Section \ref{s3}; in particular we assume that $\lnko{T_1,T_2,T_3}=1$.

\begin{proof}[Proof of Theorem \ref{t12} (i), (ii)]
In case $q=1$ we have to show that if $(a_j)_{j=0}^{pr-r}$ is the beginning of the sequence $(1,0^{r-1})^{\infty}$ and $A=e^{2i\pi n/pqr}$ for some $n\in N$ (see \eqref{310}), then 
\begin{equation*} 
\sum_{j=0}^{pr-r}a_jA^j=0.
\end{equation*}
Since $A^{pqr}=1$ and $A^r\ne 1$ (because $\dfrac{n}{pq}\notin \mathbb{Z}$), we have
\begin{equation*}
\sum_{j=0}^{r(p-1)}a_jA^j= 1+A^r+\cdots+A^{(p-1)r}=\frac{1-A^{pr}}{1-A^r}=0.
\end{equation*}

In case $r=1$ we may repeat this proof by exchanging $q$ and $r$.
\end{proof}

For the proof of Theorem \ref{t12} (iii) we need a lemma:

\begin{lemma}\label{l51}\mbox{}

\begin{enumerate}[\upshape (i)]
\item For each $0<j<qr$ the sum defining $a_j$ contains at most one term $1$ and at most one term $-1$. 
\item If $(q-1)(r-1)<j<qr$, then we have $a_j=0$. 
\item We have $a_{(q-1)(r-1)}=1$.
\end{enumerate}
\end{lemma}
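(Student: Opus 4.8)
The plan is to read an explicit combinatorial formula for the coefficients $a_j$ off the generating function \eqref{311}, and then to deduce all three statements from elementary properties of representations of integers in the form $qk+rm$ with $q,r$ coprime.

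First I would expand the right-hand side of \eqref{311}. Since $\lnko{q,r}=1$, we have the formal power series identity
\begin{equation*}
\frac{1-x}{(1-x^q)(1-x^r)}=(1-x)\sum_{k,m\ge 0}x^{qk+rm}.
\end{equation*}
Because we only care about indices in the range $0<j<qr\le pqr$, the factor $1-x^{pqr}$ in the numerator of \eqref{311} does not affect the coefficient of $x^j$; hence, setting
\begin{equation*}
N(s):=\#\set{(k,m)\in\ZZ^2 : k,m\ge 0,\ qk+rm=s},
\end{equation*}
we obtain the key identity $a_j=N(j)-N(j-1)$ for every $0\le j<pqr$. In other words, the ``sum defining $a_j$'' is nothing but the difference of the two representation counts, each representation of $j$ contributing a term $1$ and each representation of $j-1$ a term $-1$.

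The lemma now rests on two classical facts about the coprime pair $q,r$, which I would establish directly. The first is \emph{uniqueness}: for $0\le s<qr$ the equation $qk+rm=s$ has at most one solution with $k,m\ge 0$. Indeed, any two solutions differ by $(k-k',m'-m)=(tr,tq)$ with $t\in\ZZ$, since $\lnko{q,r}=1$ forces $r\mid(k-k')$; but $0\le k,k'<r$ leaves only $t=0$. Thus $N(s)\in\set{0,1}$ for $0\le s<qr$, and since for $0<j<qr$ both $j$ and $j-1$ lie in this range, part (i) is immediate.

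For parts (ii) and (iii) I would use the Frobenius description of representable integers: the number $qr-q-r=(q-1)(r-1)-1$ is not representable, whereas every $s\ge(q-1)(r-1)$ is. Representability of such $s$ follows by taking the unique $m\in\set{0,\ldots,q-1}$ with $rm\equiv s\pmod q$ and noting that $s-rm$ is then a multiple of $q$ which is at least $(q-1)(r-1)-r(q-1)=1-q>-q$, hence nonnegative; non-representability of $qr-q-r$ follows because the same residue argument forces $m=q-1$ and $qk=-q$, i.e.\ $k=-1<0$. Granting this, if $(q-1)(r-1)<j<qr$ then both $j$ and $j-1$ lie in $[(q-1)(r-1),qr)$, so by uniqueness $N(j)=N(j-1)=1$ and therefore $a_j=0$, which is (ii). For (iii) take $j=(q-1)(r-1)$: here $N(j)=1$ since $j\ge(q-1)(r-1)$ and $j<qr$, while $j-1=qr-q-r$ is the Frobenius number, so $N(j-1)=0$; hence $a_{(q-1)(r-1)}=1-0=1$. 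I expect the one genuine obstacle to be this Frobenius statement -- the simultaneous non-representability of $qr-q-r$ and representability of everything larger -- since the coefficient identity $a_j=N(j)-N(j-1)$ and the uniqueness claim are routine once $\lnko{q,r}=1$ is used; all remaining work is the bookkeeping of these counts at the three ranges of indices.
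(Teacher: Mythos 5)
Your proposal is correct and takes essentially the same route as the paper: your identity $a_j=N(j)-N(j-1)$ is precisely the paper's ``sum defining $a_j$'' (read off there from the combinatorial form of the sequence in Theorem \ref{t12} (iii) rather than from \eqref{311}), your uniqueness argument via $\lnko{q,r}=1$ is the paper's proof of (i), and your representability and Frobenius-number arguments for $s\ge (q-1)(r-1)$ and $s=qr-q-r$ reproduce the paper's complete-residue-system computations for (ii) and (iii), with the roles of $q$ and $r$ interchanged. The only cosmetic loose end is that non-representability of $qr-q-r$ needs $m\equiv q-1 \pmod q$ for \emph{any} candidate representation (larger $m$ making $k$ only more negative), which your sketch handles implicitly.
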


\begin{proof}
\begin{enumerate}[\upshape (i)]
\item Assume that the sum  defining $a_{j_0}$ contains two different terms $1$, i.e.,
\begin{equation*} 
j_0=l_1q+m_1r=l_2q+m_2r
\end{equation*}
with  some nonnegative integers $l_1,l_2,m_1,m_2$ such that $(l_1,m_1)\neq (l_2,m_2)$. 
Then $l_1\ne l_2$, $m_1\ne m_2$, $l_{1},l_{2}\in \{0,\dots, r-1\}$ because $0<j_0<qr$ and
\begin{equation*} 
(l_1-l_2)q=(m_2-m_1)r. 
\end{equation*} 
Since $q$ and $r$ are relatively prime, this implies that $r$ is a divisor of $(l_1-l_2)$, hence $|l_{1}-l_{2}|\geq r$ contradicting the hypothesis above.

Now assume that the sum  defining $a_{j_0}$ contains two different terms $-1$. 
Then we have 
\begin{equation*} 
j_0-1=l_1q+m_1r=l_2q+m_2r
\end{equation*}
with  some nonnegative integers $l_1,l_2,m_1,m_2$ such that $(l_1,m_1)\neq (l_2,m_2)$, and we may repeat the preceding proof.

\item If $j<qr$, then the sum  defining $a_j$ contains at most one term $1$ and at most one term $-1$ by (i). 
To complete the proof we show that for each index satisfying $(q-1)(r-1)<j<qr$ there exist nonnegative integers $l_1,l_2,m_1,m_2$ such that
\begin{equation*}  
j=l_1q+m_1r \quad \text{and} \quad  j=l_2q+m_2r+1. 
\end{equation*} 

Since $q$ and $r$ are relatively prime
\begin{equation*}
\{j-kq : 0\leq k\leq r-1\}
\end{equation*}
form a complete system of residues modulo $r$. 
Hence one of them, say $j-l_1q$ is a multiple of $r$, so that $j=l_1q+m_1r$ with some integer $m_1$.
Since $j>(q-1)(r-1)$, we have
\begin{equation*}
j-l_1q>(q-1)(r-1)-(r-1)q\ge 1-r,
\end{equation*}
so that $m_1\ge 0$. 

We remark for later reference that this part of the proof remains valid for $j=(q-1)(r-1)$, too, because then we still have
\begin{equation*}
j-l_1q\ge (q-1)(r-1)-(r-1)q\ge 1-r.
\end{equation*}

Similarly, the numbers
\begin{equation*}
\{j-1-kq : 0\leq k\leq r-1\}
\end{equation*}
form a complete system of residues modulo $r$, so that $j-1=l_2q+m_2r$ with some integer $0\le l_2\le r-1$ and $m_2$. 
Since
\begin{equation*}
j-1-l_1q\ge (q-1)(r-1)-(r-1)q\ge 1-r,
\end{equation*}
we have $m_2\ge 0$.

\item In view of (i) it sufficient to show that there exist nonnegative integers $l_1, m_1$ satisfying
\begin{equation*}  
(q-1)(r-1)=l_1q+m_1r, 
\end{equation*} 
but there do not exist nonnegative integers $l_2, m_2$ satisfying
\begin{equation*} 
(q-1)(r-1)=l_2q+m_2r+1. 
\end{equation*} 

The existence of $l_1$ and $m_1$ for $(q-1)(r-1)$ has been proved during the proof of  (ii).

Next, since 
\begin{equation*}
(r-1)q+1>(q-1)(r-1),
\end{equation*}
the second equality could only hold if $0\le l_2\le r-2$, i.e., if one of the numbers 
\begin{equation*}
(q-1)(r-1)-1-lq\quad (l=0,\ldots, r-2)
\end{equation*}
is a multiple of $r$. 
But this is not the case because  
\begin{equation*}
(q-1)(r-1)-1-lq\quad (l=0,\ldots, r-1)
\end{equation*}
is a complete system of residues modulo $r$, and the extra element 
\begin{equation*}
(q-1)(r-1)-1-(r-1)q=-r
\end{equation*}
is a multiple of $r$. 
\end{enumerate}
\end{proof}

Now we are ready to complete the proof of Theorem \ref{t12}. 

\begin{proof}[Proof of Theorem \ref{t12} (iii)]
Set again $A=e^{2i\pi n/pqr}$ for some $n\in N$ (see \eqref{310}).
We have clearly $A^{pqr}=1$.

Since
\begin{equation*}
pqr-q-r+1=(p-1)qr+(q-1)(r-1),
\end{equation*}
it follows from Lemma \ref{l51} (ii) and the $qr$-periodicity of $(a_j)$ that 
\begin{align*}
\sum_{j=0}^{pr-1-r+1}a_jA^j &= \sum_{k=0}^{p-1}\sum_{j=kqr}^{kqr+(q-1)(r-1)}a_jA^j\\
&= \sum_{k=0}^{p-1}\sum_{j=0}^{(q-1)(r-1)}a_{j+kqr}A^{j+kqr}\\
&= \sum_{j=0}^{(q-1)(r-1)} \sum_{k=0}^{p-1}a_jA^{j+kqr}.
\end{align*}

If $A^{qr}\neq 1$, then 
\begin{equation*} 
\sum_{k=0}^{p-1}a_jA^{j+kqr}=\frac{A^{j+pqr}-A^j}{A^{qr}-1}=0
\end{equation*} 
and therefore
\begin{equation*}  
\sum_{j=0}^{pqr-r-q+1}a_jA^j=0.
\end{equation*}

Henceforth we assume that $A^{qr}=1$. 
Then 
\begin{equation*}  
\sum_{j=0}^{pr-1-r+1}a_jA^j= \sum_{j=0}^{(q-1)(r-1)} \sum_{k=0}^{p-1}a_jA^{j+kqr} =p\sum_{j=0}^{(q-1)(r-1)}a_jA^j,
\end{equation*} 
and it remains to prove that
\begin{equation}\label{51}
\sum_{j=0}^{(q-1)(r-1)}a_jA^j=0.
\end{equation} 

It follows from the definition of $(a_j)$ that
\begin{equation*}
\sum_{j=0}^{(q-1)(r-1)}a_jA^j=\sum_{\substack{0\le j\le (q-1)(r-1)\\ j=\ell q+mr}}A^j-\sum_{\substack{0\le j\le (q-1)(r-1)\\ j=\ell q+mr+1}}A^j.
\end{equation*}
Applying Lemma \ref{l51} (iii) this yields
\begin{equation}\label{52}
\sum_{j=0}^{(q-1)(r-1)}a_jA^j=A^{(q-1)(r-1)}+(1-A)\sum_{\substack{0\le j<(q-1)(r-1)\\ j=\ell q+mr}}A^j.
\end{equation} 
We have to prove that this last expression vanishes. 

In order to simplify the last sum, for each $\ell=-1,0,\ldots, r-2$ we denote by $m_{\ell}$ the smallest integer $m$ satisfying $\ell q+mr\ge (q-1)(r-1)$. 
For example, we have $m_{-1}=q$ and thus $-q+m_{-1}r=q(r-1)$ because 
\begin{equation*}
-q+qr\ge (q-1)(r-1)\quad\text{but}\quad -q+(q-1)r<(q-1)(r-1).
\end{equation*}

Using this notation we deduce from \eqref{52} the following equality by summing some geometric series\footnote{The value $\ell=-1$ is not used here but we will need it later.}:
\begin{align*}
\sum_{j=0}^{(q-1)(r-1)}a_jA^j
&=A^{(q-1)(r-1)}+(1-A)\sum_{\ell=0}^{r-2}\sum_{m=0}^{m_{\ell}-1}A^{\ell q+mr}\\
&=A^{(q-1)(r-1)}+\frac{1-A}{1-A^r}\sum_{\ell=0}^{r-2}\left( A^{\ell q}-A^{\ell q+m_{\ell}r}\right) \\
&=A^{(q-1)(r-1)}+\frac{(1-A)(1-A^{(r-1)q})}{(1-A^r)(1-A^q)}\\ 
&\qquad\qquad -\frac{1-A}{1-A^r}\sum_{\ell=0}^{r-2}A^{\ell q+m_{\ell}r}.
\end{align*}

We claim that 
\begin{equation}\label{53}
\sum_{\ell=0}^{r-2}A^{\ell q+m_{\ell}r}=\sum_{k=0}^{r-2}A^{(q-1)(r-1)+k},
\end{equation} 
and hence 
\begin{multline}\label{54}
\sum_{j=0}^{(q-1)(r-1)}a_jA^j=A^{(q-1)(r-1)}+\frac{(1-A)(1-A^{(r-1)q})}{(1-A^r)(1-A^q)}\\
-\frac{1-A}{1-A^r}A^{(q-1)(r-1)}\frac{1-A^{r-1}}{1-A}.
\end{multline} 

\begin{proof}[Proof of \eqref{53}]
Since $(q,r)=1$, $\set{\ell q+m_{\ell}r\ :\ \ell=-1,0,\ldots, r-2}$ is a complete system of residues modulo $r$. Since, furthermore
\begin{equation*}
(q-1)(r-1)\le \ell q+m_{\ell}r<(q-1)(r-1)+r
\end{equation*}
by definition of $m_{\ell}$, we have
\begin{multline*}
\set{\ell q+m_{\ell}r\ :\ \ell=-1,0,\ldots, r-2}\\
=\set{(q-1)(r-1)+k\ :\ k=0,\ldots, r-1}
\end{multline*}
Finally, since we already know that $-q+m_{-1}r=q(r-1)$, we conclude that 
\begin{multline*}
\set{\ell q+m_{\ell}r\ :\ \ell=0,\ldots, r-2}\\
=\set{(q-1)(r-1)+k\ :\ k=0,\ldots, r-2},
\end{multline*} 
and this is equivalent to \eqref{53}. 
\end{proof}

In order to prove \eqref{51} it remains to show that the right side of \eqref{54} vanishes. Multiplying by $(1-A^r)(1-A^q)$ in order to eliminate the denominators, we are going to show, equivalently that the following expression vanishes:
\begin{multline*}
E:=(1-A^r)(1-A^q)A^{(q-1)(r-1)}+(1-A)(1-A^{(r-1)q})\\
-(1-A^{r-1})(1-A^q)A^{(q-1)(r-1)}.
\end{multline*}
Combining the terms containing the factor $A^{(q-1)(r-1)}$ and taking into account that $A^{qr}=1$, we may simplify the expression as follows:
\begin{align*}
E
&=A^{(q-1)(r-1)}(1-A^q)(A^{r-1}-A^r)+(1-A)(1-A^{(r-1)q})\\
&=A^{1-q-r}(1-A^q)A^{r-1}(1-A)+(1-A)(1-A^{-q})\\
&=A^{-q}(1-A^q)(1-A)+(1-A)(1-A^{-q})\\
&=(A^{-q}-1)(1-A)+(1-A)(1-A^{-q})\\
&=0.
\end{align*}
The proof of the theorem is complete.
\end{proof}

We end this section by giving an interesting corollary of Lemma \ref{l51}:

\begin{corollary}\label{c52}
The function $h_3$ is symmetrical with respect to the middle of its support.
\end{corollary}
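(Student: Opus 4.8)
The plan is to read the symmetry off directly from the generating polynomial $P_3$ computed in Section~\ref{s4}, without re-examining the individual coefficients. Recall that $h_3$ is the step function taking the value $c_k$ on $[k,k+1)$, that its support is $(0,T)$, and that its generating polynomial factors as
\begin{equation*}
P_3(x)=\frac{1-x^{pqr}}{1-x^{q}}\cdot\frac{1-x^{T_2}}{1-x^{r}}\cdot\frac{1-x^{T_1}}{1-x^{\lnko{T_1,T_2}}}.
\end{equation*}
Symmetry of $h_3$ about the midpoint $T/2$ of its support means $h_3(t)=h_3(T-t)$, which for a step function on unit intervals is exactly the relation $c_k=c_{D-k}$ with $D:=\deg P_3$; in other words, I want to show that $P_3$ is self-reciprocal (palindromic) and that $D=T-1$, so that the axis of symmetry falls precisely at $T/2$.

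First I would note that each of the three factors is self-reciprocal. For positive integers $a,b$ we have $\frac{1-x^{ab}}{1-x^{b}}=\sum_{k=0}^{a-1}x^{kb}$, a polynomial of degree $(a-1)b$ whose nonzero coefficients occupy the positions $0,b,\dots,(a-1)b$, which are symmetric about $(a-1)b/2$; equivalently $x^{(a-1)b}\,\frac{1-x^{-ab}}{1-x^{-b}}=\frac{1-x^{ab}}{1-x^{b}}$. Since $pqr$, $T_2$, $T_1$ are multiples of $q$, $r$, $\lnko{T_1,T_2}$ respectively, all three factors have this shape and are therefore self-reciprocal. A product of self-reciprocal polynomials is again self-reciprocal, since for $\deg P=D_1$ and $\deg Q=D_2$ one has $x^{D_1+D_2}(PQ)(1/x)=\bigl(x^{D_1}P(1/x)\bigr)\bigl(x^{D_2}Q(1/x)\bigr)=P(x)Q(x)$. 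Hence $P_3$ is self-reciprocal.

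It then remains to check the degree bookkeeping: $D=(pqr-q)+(T_2-r)+(T_1-\lnko{T_1,T_2})$, and substituting $T_3=pqr$, $q=\lnko{T_1,T_3}$, $r=\lnko{T_2,T_3}$ together with the normalization $\lnko{T_1,T_2,T_3}=1$ into the formula $T=T_1+T_2+T_3-\lnko{T_1,T_2}-\lnko{T_2,T_3}-\lnko{T_3,T_1}+\lnko{T_1,T_2,T_3}$ gives $D=T-1$. Consequently $c_k=c_{T-1-k}$ for every $k$, i.e.\ $h_3(t)=h_3(T-t)$ on $(0,T)$, which is the asserted symmetry. I do not anticipate any real difficulty here; the only point requiring care is this degree count, ensuring that the symmetry centre of $P_3$ lands at $T/2$ rather than at a shifted position.

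Finally, I would remark that the symmetry is already visible one level earlier, in the form most directly tied to Lemma~\ref{l51}: the coefficient polynomial $Q(x)=\frac{(1-x)(1-x^{pqr})}{(1-x^{q})(1-x^{r})}$ of Proposition~\ref{p31} is itself self-reciprocal of degree $pqr-q-r+1$, so $a_j=a_{pqr-q-r+1-j}$. Combining this palindromy of $(a_j)$ with the symmetry of $h_2$ about $T_{12}/2$ and the identity $T=T_{12}+(pqr-q-r+1)$ in the representation $h_3(t)=\sum_j a_jh_2(t-j)$ yields $h_3(T-t)=h_3(t)$ after the substitution $j\mapsto (pqr-q-r+1)-j$.
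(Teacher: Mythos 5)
Your argument is correct, and it takes a genuinely different route from the paper's. The paper proves Corollary \ref{c52} softly: each space $X_{T_i}$ is invariant under the reflection $t\mapsto T-t$, so $h_3(T-t)$ is again orthogonal to $X$ in $L^2(0,T)$, and by uniqueness of $h_3$ up to a multiplicative constant $h_3(T-t)\equiv Kh_3(t)$; the constant is then pinned down to $K=1$ by comparing the two endpoint coefficients, $a_0=1$ and $a_{pqr-q-r+1}=a_{(q-1)(r-1)}=1$, which requires the $qr$-periodicity of $(a_j)$ from Theorem \ref{t12} and Lemma \ref{l51} (iii). You instead read the symmetry directly off the factorization of $P_3$ from Section \ref{s4}: each factor $(1-x^{ab})/(1-x^{b})=\sum_{k=0}^{a-1}x^{kb}$ is palindromic, palindromy is preserved under products, and your degree bookkeeping is right --- $\deg P_3=(pqr-q)+(T_2-r)+(T_1-\lnko{T_1,T_2})=T-\lnko{T_1,T_2,T_3}=T-1$ under the standing normalization $\lnko{T_1,T_2,T_3}=1$, so the axis of symmetry lands exactly at $T/2$ and $c_k=c_{T-1-k}$ follows. (Your closing remark is also sound: $Q(x)$ is a ratio of anti-palindromic factors, two upstairs and two downstairs, hence palindromic, giving $a_j=a_{pqr-q-r+1-j}$; together with the palindromy of $P_2$ and $T=T_{12}+(pqr-q-r+1)$ this reproves the corollary at the level of the representation $h_3(t)=\sum_j a_jh_2(t-j)$.) Comparing the two: your proof is elementary and self-contained given Proposition \ref{p31} and the Section \ref{s4} computation --- it bypasses both the uniqueness argument and the fine structure of $(a_j)$ from Theorem \ref{t12} (iii) and Lemma \ref{l51} --- and it delivers strictly more than the statement, namely the full coefficient palindromy rather than just equality of values under reflection. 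The paper's argument, in exchange, is conceptual: it explains \emph{why} symmetry must hold (reflection invariance of $X$ plus one-dimensionality of the orthogonal complement in $L^2(0,T)$), reduces everything to identifying a single constant $K$, and transfers verbatim to $h_n$ for any $n$, where an explicit palindromic factorization would be more laborious to set up.
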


\begin{proof}
Since each function $f\in X_T=X_{T_1}+X_{T_2}+X_{T_3}$ is symmetrical with respect to $T/2$, $h_3(T-t)$ is also orthogonal to $X_T$ and hence $h_3(T-t)\equiv Kh_3(t)$ for a suitable constant $K$. 

Since
\begin{equation*}
a_{pqr-q-r+1}=a_{qr-q-r+1}=a_{(q-1)(r-1)}=1=a_0
\end{equation*}
by Theorem \ref{t12} and Lemma \ref{l51}, $h_3$ has the same values at the beginning and at the end of its support, so that $K=1$.
\end{proof}

\section{Proof of the second part of Theorem \ref{t11}}\label{s6}

The proof is based on the study of the finer structure of $h_2$ and $(a_j)$. 
We need a lemma.

\begin{lemma}\label{l61}
If $\lnko{T_1,T_2}<T_2<T_1$, then setting $d:=\lnko{T_1,T_2}$ and $m:=T_2/d(\ge 2)$ we have 
\begin{equation*}
h_2\sim 1^d 2^d\cdots (m-1)^d m^{T_1-T_2+d} (m-1)^d\cdots 2^d 1^d,
\end{equation*}
where a block $a^b$ means $b$ consecutive elements $a$, and we omit the commas on the right side.
\end{lemma}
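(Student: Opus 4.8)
The plan is to read off the values of $h_2$ directly from its defining formula in Proposition \ref{p21}, rather than through the associated polynomial. Writing $d:=\lnko{T_1,T_2}$, $m:=T_2/d$ and $a:=T_1/d$, the hypothesis $d<T_2<T_1$ gives $2\le m<a$, and \eqref{21} expresses $h_2$ as the sum of $m$ shifted indicator functions,
\begin{equation*}
h_2=\sum_{j=0}^{m-1}\chi_{[jd,\,jd+T_1)}.
\end{equation*}
The first observation is that every summand jumps only at the points $jd$ and $jd+T_1=(j+a)d$, all of which are multiples of $d$; hence $h_2$ is constant on each interval $[id,(i+1)d)$. Since $T=T_1+T_2-d=(a+m-1)d$, there are exactly $a+m-1$ such $d$-blocks, indexed by $i=0,\ldots,a+m-2$, and it suffices to compute the constant value $v_i$ of $h_2$ on the $i$-th block.

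First I would express $v_i$ as a counting quantity. The $i$-th block $[id,(i+1)d)$ lies in the support $[jd,jd+T_1)$ of the $j$-th summand precisely when $jd\le id$ and $(i+1)d-1<jd+ad$, that is, when $i-a+1\le j\le i$. Intersecting with the range $0\le j\le m-1$ gives
\begin{equation*}
v_i=\#\set{j\ :\ \max(0,i-a+1)\le j\le \min(m-1,i)}=\min(m-1,i)-\max(0,i-a+1)+1 .
\end{equation*}
The remaining step is to evaluate this count in the three natural regimes, using $m<a$. For $0\le i\le m-1$ one has $i-a+1\le 0$ and $i\le m-1$, so $v_i=i+1$, producing the increasing part $1,2,\ldots,m$. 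For $m-1\le i\le a-1$ one has $i-a+1\le 0$ and $i\ge m-1$, so $v_i=m$, a plateau over $a-1-(m-1)+1=a-m+1$ blocks. For $a-1\le i\le a+m-2$ one has $i-a+1\ge 0$ and $i\ge m-1$, so $v_i=(m-1)-(i-a+1)+1=a+m-1-i$, producing the decreasing part $m,m-1,\ldots,1$.

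Since each value occupies a whole $d$-block, i.e.\ $d$ consecutive unit intervals, the sequence of values on unit intervals is
\begin{equation*}
1^d\,2^d\cdots(m-1)^d\,m^{(a-m+1)d}\,(m-1)^d\cdots2^d\,1^d,
\end{equation*}
and $(a-m+1)d=T_1-T_2+d$ supplies exactly the claimed exponent of the central block. A final check that the block lengths sum to $(m-1)d+(a-m+1)d+(m-1)d=(a+m-1)d=T$ confirms that no block is missing. The computations are elementary; the only delicate point, and where I would be most careful, is the bookkeeping at the two junctions $i=m-1$ and $i=a-1$, where consecutive regimes overlap: one must verify that the formulas agree there (both give $v_{m-1}=v_{a-1}=m$) so that the ascent, plateau, and descent fit together with neither gap nor overlap. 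This is precisely where the strict inequality $m<a$, equivalently $T_2<T_1$, is indispensable, since it guarantees a nonempty plateau and keeps the two ramps from colliding.
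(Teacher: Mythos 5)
Your proof is correct and takes essentially the same route as the paper: both read off the values of $h_2$ directly from the representation \eqref{21} as the sum of $m$ copies of $h_1=\chi_{[0,T_1)}$ shifted by multiples of $d$, yielding the trapezoidal profile. The only difference is one of rigor rather than method: the paper asserts the shape $1^d 2^d\cdots n^b\cdots 2^d 1^d$ qualitatively and then identifies $n=m$ and $b=T_1-T_2+d$ via the common intersection $[T_2-d,T_1]$ of the supports, whereas you establish the same fact by an explicit count of the summands covering each $d$-block in three regimes, including the junction checks at $i=m-1$ and $i=a-1$ that the paper leaves implicit.
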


\begin{proof}
It follows from the formulas
\begin{equation*}
h_1\sim 1^{T_1}\quad \text{and} \quad h_2(t)=\sum_{j=0}^{m-1}h_1(t-dj)
\end{equation*}
that
\begin{equation*}
h_2\sim 1^d, 2^d,\ldots, (n-1)^d, n^b, (n-1)^d\ldots, 2^d, 1^d
\end{equation*}
for some integers $1\le n\le m$ and $b\ge 0$. 

We have $n=m$ because $T_1>T_2=md>(m-1)d$, and therefore the supports of the $n$ functions $h_1(t-dj)$ contain a common interval.
More precisely, the intersection of the supports is the interval $[(m-1)d,T_1]=[T_2-d,T_1]$, whence $b=T_1-T_2+d$.
\end{proof}

\begin{example}
If $(T_1,T_2)=(7,4)$ then $h_1=1^7$ and 
\begin{align*}
h_2(t) 
&= \sum_{j=0}^3h_1(t-j)\\
&\sim 1^7000+01^700+001^70+0001^7\\
&= 1234444321\\
&= 1^12^13^14^{(7-4+1)}3^12^11^1.
\end{align*}
\end{example}

\begin{proof}[Proof of Theorem 1.1 (ii)]
We distinguish  three cases:
\begin{enumerate}[\upshape (i)]
\item  If $\lnko{T_i,T_j}=\min\set{T_i,T_j}$ for some $i\ne j$, then $h_3$ is strictly positive.

Indeed, since $h_3$ does not change for any permutation of the periods, we may assume that 
\begin{equation*} 
\lnko{T_1,T_3}= \min(T_1,T_3)=T_1.
\end{equation*} 
Then $T_3$ is a multiple of $T_1$, so that $h_3=h_2$.
We conclude by recalling that $h_2$ is always strictly positive.

\item If $\lnko{T_i,T_j}=1$ for some $i\ne j$, then $h_3$ is strictly positive.

Indeed, we may assume by a permutation that $q=1$ where we use the notations 
\begin{equation*}  
q=\lnko{T_1,T_3} \quad , r=\lnko{T_2,T_3} \quad\text{and} \quad T_3=pqr 
\end{equation*} 
of Section \ref{s3}.
Applying Theorem \ref{t12} we have 
\begin{equation*}
h_3=\sum_{j=0}^{pr-r}a_jh_2(t-j)
\end{equation*} 
with $(a_j)=(1,0^{r-1})^{\infty}$. 
Since $h_2$ is strictly positive and all coefficients $a_j$ are nonnegative, we conclude that $h_3>0$.

\item Assume finally that $1<\lnko{T_i,T_j}<\min\set{T_i,T_j}$ whenever $i\ne j$. 
We will show that $h_3$ vanishes in the interval $[1,2)$. 

We use the usual expression 
\begin{equation}\label{61}
h_3(t) = \sum_{j=0}^{pqr-q-r+1}a_jh_2(t-j)
\end{equation}
where $p,q,r$ are given as in the preceding case. 
Since $q>1$ and $r>1$ by our asumptions, it follows from Theorem \ref{t12} that $a_0=1$ and $a_1=-1$. 
Furthermore, it follows from Lemma \ref{l61} that $h_2=1$ on the interval $(0,d)$ with $d=\lnko{T_1,T_2}\ge 2$. 
Applying the formula \eqref{61} we conclude that $h_3=a_0=1$ in $[0,1)$ and $h_3=a_0+a_1=1-1=0$ in $[1,2)$.
\end{enumerate}
\end{proof}

\section{Some results for more than three periods}\label{s7}

Given any finite number of positive integers $T_1,\ldots, T_n$, the set
\begin{equation*}
X:=\overline{X_{T_1}+\cdots+X_{T_n}}
\end{equation*} 
is spanned by the functions $e^{2i\pi mt/(T_1\cdots T_n)}$ where $m$ runs over the non-zero multiples of the numbers $T_1\cdots T_n/T_i$, $i=1,\ldots,n$. 
There exists again a critical length $T\ge\min\set{T_1,\ldots, T_n}$ and a function $h_n$ such that $X$ is the orthogonal complement of $h_n$ in $L^2(0,T)$. 

The method of Section \ref{s4} may be easily generalized to determine the function $h_n$. 
Let $n=4$ and $\lnko{T_1,T_2,T_3,T_4}$ for simplicity. 
Then we have 
\begin{equation*}
h_4(t)=\sum_{j=0}^Ta_jh_3(t-j)
\end{equation*}
with a suitable sequence $(a_j)$ of real numbers determined apart from a multiplicative constant.
Now the method of Section \ref{s4} shows that the numbers $a_j$ are the coefficients of the polynomial
\begin{equation*}
P_4(x)=\frac{(1-x^{T_1})(1-x^{T_2})(1-x^{T_3})(1-X^{\lnko{T_{1},T_{2},T_{3}}})Q_4(x)}{(1-x)(1-x^{\lnko{T_1,T_2}})(1-x^{\lnko{T_2,T_3}})(1-x^{\lnko{T_1,T_3}})}
\end{equation*}
with 
\begin{equation*}
Q_4(x)=\frac{(1-x^{T_4})(1-x^{\lnko{T_1,T_2,T_4}})(1-x^{\lnko{T_1,T_3,T_4}})(1-x^{\lnko{T_2,T_3,T_4}})}{(1-x^{\lnko{T_1,T_4}})(1-x^{\lnko{T_2,T_4}})(1-x^{\lnko{T_3,T_4}})(1-x^{\lnko{T_1,T_2,T_3,T_4}})}.
\end{equation*}
Using this construction we may prove the following

\begin{proposition}\label{p71}
If there exist two subsets $\{T_1^1,T_2^1,T_3^1\}$, $\{T_1^2,T_2^2,T_3^2\}$ of three elements and two subsets  $\{T_1^{3},T_2^{3}\}$ and $\{T_1^{4},T_2^{4}\}$ of two elements of $\{T_1,T_2,T_3,T_4 \}$ such that
\begin{equation}
\lnko{T_1^1,T_2^1,T_3^1}=\lnko{T_1^{3},T_2^{3}} \quad \mbox{and} \quad \lnko{T_1^2,T_2^2,T_3^2}=\lnko{T_1^{4},T_2^{4}},
\end{equation}
then the function $h_4(t)$ is nonnegative.
\end{proposition}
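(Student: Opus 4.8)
The plan is to follow the strategy of Section~\ref{s4}: it suffices to show that the polynomial $P_4(x)$ has nonnegative coefficients, since by the observation of that section a step function whose associated polynomial has nonnegative coefficients is itself nonnegative. As before I assume by scaling that $\lnko{T_1,T_2,T_3,T_4}=1$, so that the coefficients of $P_4$ are exactly the values of $h_4$ on the unit intervals. The goal is therefore to factor $P_4(x)$ into fractions of the shape $(1-x^{N})/(1-x^{D})$ with $D$ a divisor of $N$, because each such fraction equals $\sum_{k=0}^{N/D-1}x^{kD}$ and hence has nonnegative coefficients, and a product of polynomials with nonnegative coefficients again has nonnegative coefficients.

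I first record the exponents occurring in $P_4(x)$. In the numerator these are the four numbers $T_1,T_2,T_3,T_4$ together with \emph{all four} triple gcds $\lnko{T_1,T_2,T_3}$, $\lnko{T_1,T_2,T_4}$, $\lnko{T_1,T_3,T_4}$, $\lnko{T_2,T_3,T_4}$; in the denominator they are $1$ and $\lnko{T_1,T_2,T_3,T_4}$ together with \emph{all six} pair gcds $\lnko{T_i,T_j}$. The obstruction to the desired factorization is that a pair gcd $\lnko{T_i,T_j}$ divides, among the numerator exponents, only $T_i$ and $T_j$: a triple gcd is a \emph{divisor}, not a multiple, of a pair gcd, so $\lnko{T_i,T_j}$ divides no triple gcd. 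Thus the six pair gcds in the denominator would have to be matched injectively into the four numbers $T_1,\ldots,T_4$, which is impossible. This is exactly why a hypothesis is needed.

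Here the hypothesis enters. Since the two triple gcds $\lnko{T_1^1,T_2^1,T_3^1}$ and $\lnko{T_1^2,T_2^2,T_3^2}$, arising from two distinct three-element subsets, coincide with the two pair gcds $\lnko{T_1^3,T_2^3}$ and $\lnko{T_1^4,T_2^4}$, arising from two distinct two-element subsets, two numerator factors cancel exactly against two denominator factors. After this cancellation only four of the six pair gcds survive in the denominator, and only two of the four triple gcds survive in the numerator. I then match the four surviving pair gcds to the four numbers $T_1,T_2,T_3,T_4$ by a system of distinct representatives: regarding $T_1,\ldots,T_4$ as the vertices and each surviving pair gcd $\lnko{T_i,T_j}$ as the edge $\{i,j\}$ of $K_4$, these four edges are distinct, and Hall's condition is checked directly, since any $k$ distinct edges of $K_4$ span at least $k$ vertices for each $k=1,2,3,4$. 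Hence each edge can be assigned injectively to one of its two endpoints. Finally, the two leftover numerator triple gcds are matched with the two remaining denominator exponents $1$ and $\lnko{T_1,T_2,T_3,T_4}$, each of which divides every numerator exponent (the overall gcd divides every triple gcd).

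Assembling these matchings writes $P_4(x)$ as a product of six fractions $(1-x^{N})/(1-x^{D})$ with $D\mid N$, each with nonnegative coefficients, which proves that $h_4$ is nonnegative. The main point to get right is the bookkeeping of which factors survive the cancellation, together with the verification of Hall's condition on $K_4$; everything else reduces to the elementary divisibility identity already used in Section~\ref{s4}.
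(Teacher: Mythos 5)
The paper states Proposition~\ref{p71} without an explicit proof, saying only ``using this construction we may prove the following,'' and your argument supplies precisely that intended route: you cancel the two numerator factors (the triple gcds given by the hypothesis, from two distinct three-element subsets) against the two matching denominator factors (pair gcds from two distinct two-element subsets), then match the four surviving pair gcds injectively to $T_1,\ldots,T_4$ via Hall's condition on four distinct edges of $K_4$ (correctly verified, since the only three-vertex subgraph of $K_4$ has three edges) and the two surviving triple gcds to the denominator exponents $1$ and $\lnko{T_1,T_2,T_3,T_4}$, writing $P_4$ as a product of fractions $(1-x^N)/(1-x^D)$ with $D\mid N$, hence with nonnegative coefficients --- this is complete and correct. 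One inessential slip: your motivational claim that a pair gcd $\lnko{T_i,T_j}$ divides no triple gcd is false in general (equality of a pair gcd with a triple gcd is exactly the hypothesis, and proper divisibility can also occur when the pair is not contained in the triple), but your actual matching never uses this claim, only that $\lnko{T_i,T_j}$ divides $T_i$ and $T_j$ and that $1$ and $\lnko{T_1,T_2,T_3,T_4}$ divide every triple gcd, so the proof stands.
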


\begin{remarks}
~
\begin{enumerate}[\upshape (i)]
\item It can be shown that the hypothesis \eqref{p71} is satisfied if there exist two different couples $(i,j)$ and $(k,l)$ such that 
\begin{equation*}
\lnko{T_i,T_j}=\lnko{T_k,T_l}=1.
\end{equation*}
\item The hypothesis \eqref{p71} is also satisfied for
\begin{equation*}
(T_1,T_2,T_3,T_4)=(66,21,12,10).
\end{equation*}
In fact, we have even $h_4>0$ by a direct computation. 
This case was not covered by the theorems in \cite{Kom1987}.
\end{enumerate}
\end{remarks}

We have also a sufficient condition for $h_4$ to change sign:

\begin{proposition}\label{p72}
Let $(T_1,T_2,T_3,T_4)=(abc,abd,acd,bcd)$ with four relatively prime integers $1<a<b<c<d$. 
Then the corresponding orthogonal function $h_4$ has both positive and negative values in $(0,T)$.
\end{proposition}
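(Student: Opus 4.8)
The plan is to turn the displayed formula for $P_4(x)$ into a fully explicit polynomial, read off a positive value of $h_4$ for free from its constant term, and then pin down a single negative coefficient by truncating to low degree.

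\textbf{Explicit form of $P_4$.} Interpreting ``relatively prime'' as pairwise coprime, every greatest common divisor in the formula for $P_4$ is the product of the shared letters: $\lnko{T_i,T_j}$ is the product of the two letters common to $T_i,T_j$ (so $\lnko{T_1,T_2}=ab$, etc.), each triple gcd is the single common letter ($\lnko{T_1,T_2,T_3}=a$, etc.), and $\lnko{T_1,T_2,T_3,T_4}=1$. Substituting collapses the formula to
\begin{equation*}
P_4(x)=\frac{(1-x^{a})(1-x^{b})(1-x^{c})(1-x^{d})(1-x^{abc})(1-x^{abd})(1-x^{acd})(1-x^{bcd})}{(1-x)^2(1-x^{ab})(1-x^{ac})(1-x^{ad})(1-x^{bc})(1-x^{bd})(1-x^{cd})}.
\end{equation*}
Its coefficients are the successive values of $h_4$. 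Since $P_4(0)=1$, we get $h_4\equiv 1>0$ on $[0,1)$ (and, by the argument of Corollary~\ref{c52} applied to four periods, also at the end of the support), so a positive value is immediate and the entire task is to produce one negative coefficient.

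\textbf{Truncation to an explicit periodic pattern.} I would examine the coefficients of degree below $ac$, the second smallest pairwise product. Modulo $x^{ac}$ every factor $(1-x^{e})$ with $e\ge ac$ is trivial, so
\begin{equation*}
P_4(x)\equiv \frac{(1-x^{a})(1-x^{b})}{(1-x)^2(1-x^{ab})}\,(1-x^{c})(1-x^{d})\pmod{x^{ac}},
\end{equation*}
the factor $(1-x^{d})$ being retained only when $d<ac$. Now $\frac{(1-x^a)(1-x^b)}{(1-x)^2}$ is the trapezoidal sequence $1,2,\dots,a,\dots,a,\dots,2,1$ supported on $[0,a+b-2]$, and since $(a-1)(b-1)\ge 2$ gives $a+b-2<ab$, dividing by $1-x^{ab}$ merely repeats this trapezoid with period $ab$ (padded by zeros). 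Thus the truncation equals $u(x)(1-x^{c})(1-x^{d})$, where $u$ is the \emph{explicit $ab$-periodic, non-constant} sequence consisting of one trapezoid followed by zeros.

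\textbf{Extracting negativity (the crux).} For $t\ge 0$ with $c+t<\min(d,ac)$ the coefficient of $x^{c+t}$ in the truncation is exactly $u_{c+t}-u_{t}$. Because $\gcd(c,ab)=1$, the index shift $t\mapsto t+c$ is a nontrivial cyclic permutation of one period of $u$; the two shifted copies have equal total, so $\sum_t (u_{t+c}-u_t)=0$, and since $u$ is non-constant some index must satisfy $u_{c+t}<u_t$, producing a negative coefficient of $P_4$, hence a negative value of $h_4$. The real obstacle, and the only delicate point, is to guarantee such a $t$ \emph{within the admissible window} $c+t<ac$ uniformly in the relative sizes of $a,b,c,d$: this window has length $c(a-1)$, which is shorter than one period $ab$ precisely when $c$ is comparable to $b$, so one cannot simply invoke a full period. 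I expect this to reduce to a short, size-dependent inspection of where the trapezoid first descends — for example $t=0$ already works whenever $c\ge a+b-1$, yielding $h_4=-1$ on $[c,c+1)$ — and I anticipate the borderline regimes $c\approx b$ and $d\approx c$ to be the cases that must be checked by hand, the structural reduction above being otherwise automatic.
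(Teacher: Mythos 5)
Your reduction is the paper's own up to the decisive step: the same explicit factorization of $P_4$, the same truncation trick (the paper truncates modulo $X^{b+c}$, you modulo $x^{ac}$), and the same $ab$-periodic trapezoidal series $P_0=(1-x^a)(1-x^b)/\bigl((1-x)^2(1-x^{ab})\bigr)$, whose coefficients $b_k$ (your $u_k$) depend only on $k\bmod ab$ and give $a_k=b_k-b_{k-c}-b_{k-d}$ inside the window. But the crux --- actually exhibiting one negative $a_k$ --- is exactly what you leave open, and the sketch you offer for it fails. The averaging argument needs a full period $ab$ of admissible shifts $t$, and as you yourself concede the window ($t<c(a-1)$ and $c+t<d$) need not contain one. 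Worse, your only concrete fallback, that $t=0$ works whenever $c\ge a+b-1$, is false: it tacitly assumes $c<ab$, whereas only $\gcd(c,ab)=1$ is available. For $(a,b,c,d)=(2,3,7,11)$ one has $c=7\ge a+b-1=4$ but $c\equiv 1\pmod 6$, so $u_7=u_1=2>1=u_0$ and the coefficient of $x^c$ is $+1$. Thus the problematic regime is not ``$c\approx b$ or $d\approx c$'' but rather the residue of $c$ modulo $ab$, which must be controlled for all sizes of $c$; your proposal contains no mechanism for doing this.

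The paper closes exactly this gap with a two-index dichotomy in place of averaging: it proves that $a_c<0$ or $a_{b+c-1}<0$. Indeed $a_{b+c-1}=b_{b+c-1}-b_{b-1}-b_{b+c-1-d}=b_{b+c-1}-a-b_{b+c-1-d}$, so $a_{b+c-1}\ge 0$ forces $b_{b+c-1}=a$, i.e.\ $b+c-1\equiv r\pmod{ab}$ with $r\in\{a-1,\dots,b-1\}$; rewriting, $c$ is congruent modulo $ab$ to some $r'\in\{ab+a-b,\dots,ab\}$, and $r'=ab$ is excluded because $\gcd(c,ab)=1$. Since $ab+a-b\ge a+b-1$, this places the residue of $c$ in the zero plateau of the trapezoid, whence $b_c=0$ and $a_c=b_c-b_0-b_{c-d}=-1$. (In the counterexample above one indeed finds $a_{b+c-1}=a_9=u_3-u_2=-1$.) This single arithmetic observation about $c\bmod ab$ is the missing idea; without it, or a substitute of comparable precision, your argument is incomplete.
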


\begin{remark}
The hypotheses of the proposition imply that $\lnko{T_i,T_j}>1$ whenever $i\ne j$. 
This last condition is not sufficient for $h_4$ to change sign.
For example, if $(T_1,T_2,T_3,T_4)=(12,18,39,42)$, then the corresponding function $h_4$ is nonnegative on $(0,T)$, although $\lnko{T_i,T_j}>1$ whenever $i\ne j$.
\end{remark}

\begin{proof}
In this case the polynomial $P_4$ is given by 
\begin{equation*}
P_4=P_0\frac{(1-X^c)(1-X^d)(1-X^{abc})(1-X^{abd})(1-X^{acd})(1-X^{bcd})}{(1-X^{ac})(1-X^{ad})(1-X^{bc})(1-X^{bd})(1-X^{cd})}
\end{equation*}
with
\begin{equation*}
P_0=\frac{(1-X^a)(1-X^b}{(1-X)(1-X)(1-X^{ab})}.
\end{equation*}
Since the power series $\sum a_kX^k$ starts with $a_0=1$, it is sufficient to prove that $a_c<0$ or $a_{b+c-1}<0$. 
By uniqueness we may write 
\begin{align*}
P_4(X)&=P_0(1-X^c)(1-X^d)(1-X^{abc})(1-X^{abd})(1-X^{acd})(1-X^{bcd})\\
& \times(1+X^{ab}+\cdots)(1+X^{ac}+\cdots)(1+X^{ad}+\cdots)(1+X^{bc}+\cdots)\\
&\times(1+X^{bd}+\cdots)(1+X^{cd}+\cdots).
\end{align*}
Since $ac,ad,bd,cd>b+c$, we have 
\begin{equation*}
P_4(X)\equiv P_0(1-X^c)(1-X^d)\equiv P_0(1-X^c-X^d) 
\end{equation*}
modulo $X^{b+c}$. 
Writing $P_0=\sum_kb_kX^k$  we obtain that
\begin{equation}\label{72} 
a_k=b_k-b_{k-c}-b_{k-d}. 
\end{equation}
It follows that
\begin{align*}
P_0&=\sum_kb_kX^k\\
&=(1+X+\dots+X^{a-1})(1+X+\dots+X^{b-1})(1+X^{ab}+X^{2ab}+\cdots)\\
&=(\sum_{k=0}^{a-1}(k+1)X^k+\sum_{k=a}^{b-2}aX^k+\sum_{k=b-1}^{a+b-2}X^k)(1+X^{ab}+X^{2ab}+\cdots).
\end{align*}
All coefficients $b_k$ of $P_0$ belong to $[\![0,a]\!]:=\{0,1,\dots,a\}$. More precisely, since $a+b-2<ab$, denoting by $r$ the remainder of $k$ modulo $ab$, we have 
\begin{align*}
b_k &=a \Leftrightarrow r\in [\![a-1,b-1]\!]\\ \intertext{and}
b_k &=0 \Leftrightarrow r\in  [\![a+b-1,ab-1]\!].
\end{align*}
We complete the proof by showing that $a_{b+c-1}\geq 0$ implies $a_c<0$. By \eqref{72} we have 
\begin{equation*}
a_{b+c-1}=b_{b+c-1}-b_{b-1}-b_{b+c-d-1}=b_{b+c-1}-a-b_{b+c-d-1}.
\end{equation*}
Hence $b_{b+c-1}=a$, the remainder $r$ of $b+c-1$ (mod $ab$) belongs to $[\![a-1,b-1]\!]$ and therefore 
\begin{equation*}
b+c-1=kab+r \quad \mbox{with} \quad r\in [\![a-1,b-1]\!].
\end{equation*}
Setting $r':=ab+r-b+1$ this may be rewritten in the form 
\begin{equation*}
c=(k-1)ab+r' \quad \mbox{with} \quad r'\in [\![ab+a-b,ab]\!].
\end{equation*}
Since $c,a$ and $b$ are relatively prime we cannot have $r'=ab$.
Therefore in fact $r'\in [\![ab+a-b,ab-1]\!]$, and hence $b_c=0$. 
Using \eqref{p72} again we conclude that 
\begin{equation*}
a_c=b_c-b_0-b_{c-d}=0-1=-1.\qedhere
\end{equation*}
\end{proof}

\subsection*{Acknowledgments}
The author wants to express his gratitude to Vilmos Komornik for his useful comments and suggestions and   Peter Mueller for his help in the proof of Proposition \ref{p72}. 
He also thanks Mohamad Maassarani for fruitful discussions.


\begin{thebibliography}{999}\label{references}

\bibitem{CazHar1984} 
T. Cazenave, A. Haraux,
\emph{Propriétés oscillatoires des solutions de certaines équations des ondes semi-linéaires,}
C. R. Acad. Sci. Paris Sér. I Math. 298 (1984), no. 18, 449--452. 

\bibitem{CazHar1985} 
T. Cazenave, A. Haraux,
\emph{On the nature of free oscillations associated with some semilinear wave equations,}
Nonlinear partial differential equations and their applications. Collège de France seminar, Vol. VII (Paris, 1983–1984), 3--4, 59--79, Res. Notes in Math., 122, Pitman, Boston, MA, 1985. 

\bibitem{CazHar1987} 
T. Cazenave, A. Haraux,
\emph{Oscillatory phenomena associated to semilinear wave equations in one spatial dimension,} Trans. Amer. Math. Soc. 300 (1987), no. 1, 207--233.

\bibitem{CazHar1988} 
T. Cazenave, A. Haraux,
\emph{Some oscillatory properties of the wave equation in several space dimensions,}
J. Funct. Anal. 76 (1988), no. 1, 87--109.

\bibitem{HarKom1985}    
A. Haraux, V. Komornik,
{\em  Oscillations of anharmonic Fourier series and the wave equation},  
Rev. Mat. Iberoamericana 1 (1985), 4, 57--77.

\bibitem{HarKom1984}
A. Haraux, V. Komornik,
{\em   Oscillations in the wave equation}, 
Nonlinear partial differential equations and their applications.
Collège de France seminar, Vol. VIII (Paris, 1984-1985),  Pitman Res.
Notes Math. Ser. 166 (1988), 110--119.

\bibitem{Kom1987}
V. Komornik,
{\em  Density theorems for almost periodic functions. A Hilbert space approach},  
J. Math. Anal. Appl. 122 (1987), 538--554.

\bibitem{Kom1989a}
V. Komornik,
{\em  On the vibrations of a square membrane},  
Proc. Roy. Soc. Edinburgh Sect. A 111A (1989), 13--20.

\bibitem{Kom1989}
V. Komornik,
{\em  Density theorems for almost periodic functions. A distributional approach},  
Bull. Sci. Math.  (2) 113 (1989), 289--308.

\bibitem{Kom1990}
V. Komornik,
{\em  On the vibrations of a spherical membrane},  
Houston J. Math. 16 (1990), 187--193.

\bibitem{Kom1989c}
V. Komornik,
{\em  On the vibrations of solid balls},   
Acta Math. Hungar. 54 (1989), 309--317.

\bibitem{HarKom1991}
A. Haraux, V. Komornik,
{\em  On the vibrations of  rectangular plates},
Proc. Roy. Soc. Edinburgh Sect. A 119A (1991), 47--62.

\bibitem{Kom1993}
V. Komornik,
{\em   On the vibrations of rectangular membranes},   
Differential Integral Equations 6 (1993), 319--327.


\end{thebibliography}
\end{document}